\newcommand{\pctext}[2]{\text{\parbox{#1}{#2}}}
\newtheorem{thm}{Theorem}[section]
\newtheorem{cor}[thm]{Corollary}
\newtheorem{conj}[thm]{Conjecture}
\theoremstyle{plain}
\numberwithin{equation}{thm}
\theoremstyle{remark}
\def\P{\mathbb P}
\def\Q{\mathbb Q}
\def\Z{\mathbb Z}
\title{Backward Orbit Conjecture for Latt\`es Maps}
\author{Vijay  Sookdeo}
\address{
Vijay Sookdeo\\
Department of Mathematics\\
The Catholic University of America\\
Washington, DC 20064 \\
}
\begin{document}

\begin{abstract}
For a Latt\`es map $\phi:\P^1 \to \P^1$ defined over a number field $K$, we
prove a conjecture on the integrality of points in the backward orbit of $P\in
\P^1(\overline K)$ under $\phi$.
\\
\\
\emph{Accepted for publication in the Turkish Journal of Analysis
and Number Theory}
\end{abstract}

\maketitle

\section{Introduction}
Let $\phi:\P^1 \to \P^1$ be a rational map of degree $\ge 2$ defined over a
number field $K$, and write $\phi^n$ for the $n$th iterate of $\phi$. For a
point $P\in \P^1$, let $\phi^+(P)=\{P, \phi(P), \phi^2(P), \dots \}$ be the
\emph{forward orbit} of $P$ under $\phi$, and let $$\phi^-(P) = \bigcup_{n\ge 0}
\phi^{-n}(P)$$ be the \emph{backward orbit} of $P$ under $\phi$.  We say $P$ is
$\phi$-\emph{preperiodic} if and only if $\phi^+(P)$ is finite.

Viewing the projective line $\P^1$ as $\mathbb A^1 \cup \{\infty \}$ and
taking $P\in \mathbb A^1(K)$, a theorem of Silverman \cite{sil} states that if
$\infty$ is not a fixed point for $\phi^2$, then $\phi^+(P)$
contains at most finitely many points in $\mathcal{O}_K$, the ring of algebraic
integers in $K$.  If $S$ is the set of all archimedean places for $K$, then
$\mathcal{O}_K$ is the set of points in $\P^1(K)$ which are $S$-integral
relative to $\infty$ (see section 2). Replacing $\infty$ with any point $Q\in
\P^1(K)$ and $S$ with any finite set of places containing all the archimedean
places, Silverman's Theorem can be stated as:  If $Q$ is not a fixed point for
$\phi^2$, then $\phi^+(P)$ contains at most finitely many points which are
$S$-integral relative to $Q$.

A conjecture for finiteness of integral points in backward orbits
was stated in \cite[Conj. 1.2]{sook}.

\begin{conj}\label{conj}  If $Q\in \P^1(\overline K)$ is not $\phi$-preperiodic,
then $\phi^-(P)$ contains at most finitely many points in $\P^1(\overline K)$
which are $S$-integral relative to $Q$.
\end{conj}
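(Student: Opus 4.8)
The plan is to use the defining property of a Latt\`es map: the existence of an elliptic curve $E$, a finite map $\pi\colon E\to\P^1$, and a morphism $\psi\colon E\to E$ of the form $\psi(z)=\alpha(z)+\beta$ (with $\alpha\in\mathrm{End}(E)$ an isogeny and $\beta\in E$) fitting into the commutative diagram
\[
\begin{CD}
E @>\psi>> E\\
@V\pi VV @VV\pi V\\
\P^1 @>\phi>> \P^1,
\end{CD}
\]
with $\deg\psi=\deg\alpha=\deg\phi$. After replacing $K$ by a finite extension $K'$ and $S$ by a finite superset $S'$ (the set of places above $S$), I may assume $E$, $\pi$, $\psi$, and chosen lifts $\tilde P\in\pi^{-1}(P)$, $\tilde Q\in\pi^{-1}(Q)$ are all defined over the base field; this costs nothing, since any $\overline K$-point that is $S$-integral relative to $Q$ is also $S'$-integral relative to $Q$, so finiteness over $(K',S')$ yields the desired finiteness. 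Because $\pi\psi=\phi\pi$ and $\pi$ is finite, $Q$ is $\phi$-preperiodic if and only if $\tilde Q$ is $\psi$-preperiodic, and a point of $E$ is $\psi$-preperiodic exactly when it is torsion (the $\psi$-canonical height is comparable to the N\'eron--Tate height $\hat h$). Thus the hypothesis becomes $\hat h(\tilde Q)>0$.

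Next I would transfer both the backward orbit and the integrality condition to $E$. From $\pi\psi^n=\phi^n\pi$ one gets $\phi^{-n}(P)=\pi\bigl(\psi^{-n}(\pi^{-1}(P))\bigr)$, so $\phi^-(P)$ is the $\pi$-image of $\bigcup_{n\ge0}\psi^{-n}(\pi^{-1}(P))$. Writing $\psi^n(z)=\alpha^n(z)+\beta_n$, the fiber $\psi^{-n}(\tilde P)$ is the solution set of $\alpha^n(R)=\tilde P-\beta_n$, i.e.\ a coset of $\ker\alpha^n$ based at one such solution; hence $\bigcup_n\psi^{-n}(\pi^{-1}(P))$ is a finite union of families of \emph{generalized division points} of the lifts of $P$. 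Integrality passes through the finite map $\pi$ by functoriality of Weil functions, $\lambda_{\pi^{-1}(Q),v}=\lambda_{Q,v}\circ\pi+O(1)$, so after a further enlargement of $S'$ any point whose $\pi$-image is $S$-integral relative to $Q$ is itself $S'$-integral relative to the finite set $\pi^{-1}(Q)$. It therefore suffices to show that only finitely many of the generalized division points of $\pi^{-1}(P)$ are $S'$-integral relative to $\pi^{-1}(Q)$.

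The final and decisive step is an arithmetic argument on $E$. Suppose for contradiction that infinitely many such points $R_1,R_2,\dots$ exist; since the fibers $\psi^{-n}(\pi^{-1}(P))$ are finite, the depths $n_k\to\infty$, and as the $\psi$-canonical height is multiplied by $\deg\alpha$ under $\psi$ and is comparable to $\hat h$, we get $\hat h(R_k)\to0$. I would then feed this sequence into arithmetic equidistribution (Szpiro--Ullmo--Zhang, together with its non-archimedean analogue): the Galois orbits of the small-height points $R_k$ equidistribute, at every place $v$, to the canonical measure $\mu_v$ on $E$. Testing equidistribution against the N\'eron local height $\lambda_{\tilde Q,v}$ and summing over $v$, the $S'$-integrality forces $\lambda_{\tilde Q,v}(R_k)$ to remain bounded for all $v\notin S'$, so in the limit the global contribution collapses to a bounded quantity; but the same sum computes $\hat h(\tilde Q)$ up to $O(1)$, contradicting $\hat h(\tilde Q)>0$. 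This potential-theoretic input---controlling the logarithmic singularity of $\lambda_{\tilde Q,v}$ at $\tilde Q$ against the equidistributing measures, and checking that the sequence $R_k$ is generic enough to apply equidistribution---is exactly the circle of ideas of Baker--Ih--Rumely, and I expect it to be the main obstacle. The cleanest route is to cite their finiteness theorem for $S$-integral torsion and division points relative to a non-torsion point and to verify that the families above satisfy its hypotheses, with the translation $\beta$ and the multiplicity of $\pi^{-1}$ absorbed by the preliminary extension of $K$ and $S$.
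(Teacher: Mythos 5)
Your reduction is the same as the paper's in outline: lift $\phi$ to $\psi$ on $E$ via $\pi$, observe that the backward orbit lifts into the set of (generalized) division points of $\pi^{-1}(P)$, and push integrality through the finite map $\pi$ using the distribution relation for local height functions. The gap is in the endgame, which you leave as ``verify the hypotheses'' of a citation. For non-preperiodic $P$ the points of $\psi^{-n}(\pi^{-1}(P))$ are division points of \emph{non-torsion} points, so the Baker--Ih--Rumely theorem --- which concerns torsion points that are $S$-integral relative to a non-torsion point --- does not apply. The unconditional result that does apply is Grant--Ih's theorem on integral division points, and its hypothesis is not ``$\tilde Q$ non-torsion'' for a single lift $\tilde Q$: it requires an effective divisor $D$ with two points $\xi_1,\xi_2$ in its support such that $\xi_1-\xi_2$ is non-torsion. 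By fixing one lift $\tilde Q$ you discard exactly the flexibility needed to arrange this. The paper secures it by a case analysis on the four possible quotient maps $\pi$ (corresponding to $\Lambda=\mu_2,\mu_3,\mu_4,\mu_6$ in $\mathrm{Aut}(E)$): when $\pi\ne y$ one takes $D=(\xi)+(-\xi)$ with $\xi\in\pi^{-1}(Q)$ non-torsion, so $\xi-(-\xi)=2\xi$ is non-torsion; when $\pi(x,y)=y$ one has $\pi^{-1}(Q)=\{\xi,\xi',\xi''\}$ with $\xi+\xi'+\xi''=0$ and shows some difference is non-torsion, since otherwise $3\xi$ would be torsion. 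None of this appears in your argument, and it is where the real work specific to Latt\`es maps lies.

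Your fallback, a direct equidistribution argument tested against $\lambda_{\tilde Q,v}$, is not a proof as sketched: $\lambda_{\tilde Q,v}$ has a logarithmic singularity at $\tilde Q$, and controlling the mass of the equidistributing Galois orbits near that singularity (at every place, including places of bad reduction on the Berkovich line) is precisely the hard content of Baker--Ih--Rumely and Grant--Ih, not a corollary of Szpiro--Ullmo--Zhang. The concluding inference is also off as stated: showing that the global sum of local terms remains bounded does not contradict $\hat h(\tilde Q)>0$; one must extract $\hat h(\tilde Q)=0$, or a quantitative contradiction, in the limit. You correctly identify the right circle of ideas and the right class of citable theorems, but the step you defer --- matching the divisor condition of the division-point finiteness theorem to the fibers of $\pi$ --- is the substantive part of the paper's proof, so the proposal as written does not close.
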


In \cite{sook}, Conjecture \ref{conj} was shown true for the powering map
$\phi(z)=z^d$ with degree $d\ge 2$, and consequently for Chebyschev
polynomials. A generalized version of this conjecture, which is stated over a
dynamical family of maps $[\varphi]$, is given in \cite[Sec. 4]{grant_ih}.
Along those lines, our goal is to prove a general form of Conjecture \ref{conj}
where $[\varphi]$ is the family of Latt\`es maps associated to a fixed elliptic
curve $E$ defined over $K$ (see Section \ref{main}).

\section{The Chordal Metric and Integrality}
\subsection{The Chordal Metric on $\mathbb P^N$}\label{chordal}
Let $M_{K}$ be the set of places on $K$ normalized so that the product formula
holds: for all $\alpha\in K^*$, $$\prod_{v\in M_K}|\alpha|_v = 1.$$ For points
$P=[x_0:x_1:\cdots:x_N]$ and  $Q=[y_0:y_1:\cdots:y_N]$ in $\mathbb
P^N(\overline{K}_v)$, define the \emph{$v$-adic chordal metric} as $$\Delta_v
(P,Q)= \frac{\max_{i,j}(|x_iy_j-x_jy_i|_v)}{\max_i(|x_i|_v)\cdot
\max_i(|y_i|_v)}.$$ Note that $\Delta_v$ is independent of choice of projective
coordinates for $P$ and $Q$, and $0\le \Delta_v(\cdot, \cdot) \le 1$ (see
\cite{ShuSil}).

\subsection{Integrality on Projective Curves}\label{integrality}
Let $C$ be an irreducible curve in $\mathbb P^N$ defined over $K$ and $S$ a
finite subset of $M_K$ which includes all the archimedean places.  A
\emph{divisor} on $C$ defined over $\overline{K}$ is a finite formal sum $\sum
n_i Q_i$ with $n_i\in \mathbb Z$ and $Q_i\in C(\overline K)$.  The
divisor is \emph{effective} if $n_i > 0$ for each $i$, and its
\emph{support} is the set $\mbox{Supp}(D)=\{Q_1,\dots, Q_\ell \}$.

Let $\lambda_{Q,v}(P) = -\log \Delta_v(P,Q)$ and $\lambda_{D, v}(P)= \sum
n_i\lambda_{Q_i,v}(P)$ when $D=\sum n_i Q_i$.  This makes $\lambda_{D,v}$ an
arithmetic distance function on $C$ (see \cite{sil2}) and as with any arithmetic
distance function, we may use it to classify the integral points on $C$.

For an effective divisor $D = \sum n_i Q_i$ on $C$ defined over $\overline{K}$,
we say $P \in C(\overline{K})$ is \emph{$S$-integral} relative to $D$, or $P$ is
a $(D, S)$-integral point, if and only if $\lambda_{Q_i^\sigma,v}(P^\tau) = 0$
for all embeddings $\sigma, \tau:\overline{K}\to \overline{K}$ and for all
places $v\not\in S$. Furthermore, we say the set $\mathcal{R}\subset C(\overline
K)$ is $S$-integral relative to $D$ if and only if each point in $\mathcal{R}$
is $S$-integral relative to $D$.

As an example, let $C$ be the projective line $\mathbb A^1 \cup \{\infty\}$,
$S$ be the archimedean place of $K=\Q$, and $D=\infty$.  For $P=x/y$, with $x$
and $y$ are relatively prime in $\Z$, we have $\lambda_{D, v}(P)=-\log|y|_v$ for
each prime $v$.  Therefore, $P$ is $S$-integral relative to $D$ if and only
if $y=\pm 1$; that is, $P$ is $S$-integral relative to $D$ is and only if $P\in
\mathbb Z$.

From the definition we find that if $S_1 \subset S_2$ are finite subsets of
$M_K$ which contains all the archimedean places, then $P$ is a $(D,
S_2)$-integral point implies that $P$ is a $(D, S_1)$-integral point. Similarly,
if $\mbox{Supp}(D_1) \subset \mbox{Supp}(D_2)$, then $P$ is a $(D_2,
S)$-integral point implies that $P$ is also a $(D_2, S)$-integral
point.  Therefore enlarging $S$ or $\mbox{Supp}(D)$ only enlarges the set of
$(D, S)$-integrals points on $C(\overline{K})$.

For $\phi:C_1\to C_2$, a finite morphism between projective curves and $P\in
C_2$, write $$\phi^*P= \sum_ {Q\in \phi^{-1}(P)} e_{\phi}(Q)\cdot Q$$
where $e_\phi(Q) \ge 1$ is the ramification index of $\phi$ at $Q$.
Furthermore, if $D=\sum n_iQ_i$ is a divisor on $C$, then we define
$\phi^*D=\sum  n_i\phi^*Q_i$.

\begin{thm}[Distribution Relation]\label{dist}
Let $\phi:C_1\to C_2$ be a finite morphism between irreducibly smooth curves
in $\mathbb P^N(\overline{K})$.  Then for $Q\in C_1$, there is a finite set of
places $S$, depending only on $\phi$ and containing all the archimedean
places, such that  $\lambda_{P, v} \circ \phi = \lambda_{\phi^*P, v}$
for all $v\not\in S$.
\end{thm}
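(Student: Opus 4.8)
The plan is to prove the identity one place at a time. Fix a non-archimedean $v$ and the point $P\in C_2$ being pulled back, and regard both sides as functions of a variable point $x\in C_1(\overline K_v)$: the left side is $\lambda_{P,v}(\phi(x))$ and the right side is $\lambda_{\phi^*P,v}(x)=\sum_i e_\phi(Q_i)\,\lambda_{Q_i,v}(x)$, where $\phi^{-1}(P)=\{Q_i\}$. Conceptually both are arithmetic distance functions for the single effective divisor $\phi^*P$ on $C_1$ — the right side by the additivity built into the definition of $\lambda_{D,v}$, and the left side by the functoriality of distance functions under $\phi$ developed in \cite{sil2} — so the two differ by an $M_K$-bounded function and hence already agree for all but finitely many $v$. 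The real content of the theorem is to make that exceptional set explicit and, in particular, to see that it is governed by $\phi$ alone and not by $P$; this uniformity in $P$ is the step I expect to be the crux. I would obtain it by a direct computation in $v$-integral coordinates.

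In the model case $C_1=C_2=\P^1$ I would write $\phi=[F:G]$ with $F,G\in\mathcal{O}_K[X,Y]$ coprime and homogeneous of degree $d=\deg\phi$, take $P=[a:b]$, and factor the fiber form $bF-aG=c\prod_i(\beta_iX-\alpha_iY)^{e_\phi(Q_i)}$, where $Q_i=[\alpha_i:\beta_i]$. Substituting the explicit expression $\lambda_{Q_i,v}(x)=-\log|\beta_iX-\alpha_iY|_v+\log\max(|X|_v,|Y|_v)+\log\max(|\alpha_i|_v,|\beta_i|_v)$, together with the analogous expansion of $\lambda_{P,v}(\phi(x))$, the linear-factor contributions $-\sum_i e_\phi(Q_i)\log|\beta_iX-\alpha_iY|_v$ cancel between the two sides, and the difference collapses to $\bigl[\log\max(|F(x)|_v,|G(x)|_v)-d\log\max(|X|_v,|Y|_v)\bigr]$ plus an $x$-independent remainder built from $c$, $P$, and the $Q_i$.

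I would then annihilate each piece by a good-reduction hypothesis on $\phi$ that makes no reference to $P$. For the first bracket, the standard resultant estimate gives $\max(|F(x)|_v,|G(x)|_v)=\max(|X|_v,|Y|_v)^d$ for all $x$ whenever $F,G$ are $v$-integral and $\mathrm{Res}(F,G)\in\mathcal{O}_v^*$, so it vanishes off the finite set of bad-reduction places of $\phi$. For the remainder, Gauss's lemma identifies $\log|c|_v+\sum_i e_\phi(Q_i)\log\max(|\alpha_i|_v,|\beta_i|_v)$ with the Gauss norm $\log|bF-aG|_v$, reducing the remainder to $\log\max(|a|_v,|b|_v)-\log|bF-aG|_v$. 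This vanishes precisely when the Gauss norm of $bF-aG$ equals $\max(|a|_v,|b|_v)$, which holds as soon as the reductions $\overline F,\overline G$ are linearly independent over the residue field — a condition again forced by $\mathrm{Res}(F,G)\in\mathcal{O}_v^*$ and, decisively, independent of $P$. Taking $S$ to be the archimedean places together with the places where $\phi$ has bad reduction then furnishes the desired $P$-uniform exceptional set.

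The remaining task is to transport this computation to an arbitrary finite morphism of smooth curves in $\mathbb P^N$. There the single pair $(F,G)$ is replaced by local equations cutting out $P$ and the fiber points $Q_i$, and the resultant condition by good reduction of $\phi$ as a morphism of the reduced curves; the algebra is structurally identical, with the resultant estimate and Gauss's lemma giving way to their analogues for these local equations. I expect the genuine difficulty to be exactly this generalization — isolating one finite set of places, dictated by the bad reduction of $\phi$ and of the curves, that simultaneously controls the proximity comparison for every target point $P$ — rather than any single estimate along the way.
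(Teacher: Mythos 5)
Your route is genuinely different from the paper's: the paper disposes of this theorem by citing Silverman's distribution relation for arithmetic distance functions \cite[Prop.~6.2b]{sil2} and observing that the error term there is an $M_K$-bounded constant independent of $P$, hence vanishes outside a finite set of places determined by $\phi$. Your explicit computation in the case $C_1=C_2=\P^1$ is correct and complete, and it is a nice self-contained substitute for the citation in that case: writing $\phi=[F:G]$ with $F,G\in\mathcal{O}_K[X,Y]$ coprime, the factorization $bF-aG=c\prod_i(\beta_iX-\alpha_iY)^{e_\phi(Q_i)}$ does encode the ramification indices, the linear-factor terms cancel exactly as you say, the identity $\max(|F(x)|_v,|G(x)|_v)=\max(|X|_v,|Y|_v)^{\deg\phi}$ holds whenever $\operatorname{Res}(F,G)\in\mathcal{O}_v^*$, and Gauss's lemma plus the nonvanishing of $\operatorname{Res}(\overline F,\overline G)$ forces the Gauss norm of $bF-aG$ to equal $\max(|a|_v,|b|_v)$ uniformly in $P=[a:b]$. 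So for self-maps of $\P^1$ you have a full proof with $S$ equal to the archimedean places together with the places of bad reduction of $\phi$.

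The gap is your final paragraph. The theorem is stated, and is actually used in the paper, for a finite morphism $\pi:E\to\P^1$ from an elliptic curve, and there the mechanism of your computation does not transport as written: on a curve of positive genus the fiber divisor $\phi^*P$ is not the zero locus of a single global form that factors into linear pieces, so there is no ``fiber form'' whose Gauss norm you can compare to $\max(|a|_v,|b|_v)$, and the resultant of two binary forms has no immediate analogue. Saying that ``the algebra is structurally identical'' with local equations in place of $(F,G)$ is a plan, not an argument: local equations for $P$ and for the $Q_i$ are only defined up to units on open sets, and controlling those units uniformly in $P$ at almost all $v$ is precisely the hard part. The honest ways to close this are either (i) to spread out $\phi:C_1\to C_2$ to a finite flat morphism of smooth models over $\mathcal{O}_{K,S}$ and show that for $v\notin S$ the chordal metric computes intersection multiplicities with the Zariski closure of $P$ on the model, so that the distribution relation becomes the projection formula for $\phi^*$ --- which is in substance the content of \cite[Prop.~6.2b]{sil2} that the paper invokes --- or (ii) to exploit that the only covers needed in this paper are the explicit maps $\pi(x,y)=x,\,x^2,\,x^3,\,y$ on a Weierstrass curve and verify the identity for those by direct computation. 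As it stands, the statement you have actually proved is the $\P^1\to\P^1$ case, which does not cover the map $\pi:E\to\P^1$ to which the theorem is applied in Section 3.
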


\begin{proof} See \cite[Prop. 6.2b]{sil2} and note that for projective varieties
the $\lambda_{\delta W \times V}$ term is not required, and that the big-O
constant is an $M_K$-bounded constant not depending on $P$ and $Q$.
\end{proof}

\begin{cor}\label{dist2}
 Let $\phi:C_1\to C_2$ be a finite morphism between irreducibly smooth curves
in $\mathbb P^N(\overline{K})$, let $P\in C_1(\overline K)$, and let $D$ be an
effective divisor on $C _2$ defined over $K$.  Then there is a finite set of
places $S$, depending only on $\phi$ and containing all the archimedean
places, such that $\phi(P)$ is $S$-integral relative to $D$ if and only
$P$ is S-integral relative to $\phi^*D$.
\end{cor}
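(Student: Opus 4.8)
The plan is to deduce the Corollary directly from the Distribution Relation (Theorem \ref{dist}), combined with two elementary facts already recorded: the pullback of an effective divisor is effective, and each local function $\lambda_{Q,v}=-\log\Delta_v(\cdot,Q)$ is nonnegative because $0\le\Delta_v\le 1$. First I would invoke Theorem \ref{dist}. Writing $\mathrm{Supp}(D)=\{Q_1,\dots,Q_\ell\}$, the theorem produces, for each target point, a finite set of places outside of which $\lambda_{Q_i,v}\circ\phi=\lambda_{\phi^*Q_i,v}$; the crucial feature is that this set depends only on $\phi$ and not on the point, so a single finite $S$ containing all archimedean places serves for every point of $C_2$ at once. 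Because the identity holds for every target point, it holds in particular for each conjugate $Q_i^\sigma$, and since $\phi$ is defined over $K$ one has $\phi^*(Q_i^\sigma)=(\phi^*Q_i)^\sigma$ and $\phi(P)^\tau=\phi(P^\tau)$ for all embeddings $\sigma,\tau$. Summing with the multiplicities $n_i$ then gives $\lambda_{D^\sigma,v}\circ\phi=\lambda_{(\phi^*D)^\sigma,v}$ on $C_1(\overline K)$ for every $v\notin S$, where $\phi^*D=\sum_i n_i\,\phi^*Q_i$ is again effective and defined over $K$.

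The heart of the argument is the reduction from the sum $\lambda_{E,v}$ attached to an effective divisor $E=\sum m_j R_j$ to its individual terms: since each $\lambda_{R_j,v}\ge 0$ and each $m_j>0$, the value $\lambda_{E,v}(X)=\sum_j m_j\lambda_{R_j,v}(X)$ vanishes if and only if $\lambda_{R_j,v}(X)=0$ for every $j$. Applying this with $E=D^\sigma$ and with $E=(\phi^*D)^\sigma$ shows that the support-based definition of $(D,S)$-integrality is equivalent to $\lambda_{D^\sigma,v}(\phi(P^\tau))=0$ for all $\sigma,\tau$ and all $v\notin S$, and that $(\phi^*D,S)$-integrality of $P$ is equivalent to $\lambda_{(\phi^*D)^\sigma,v}(P^\tau)=0$ for all $\sigma,\tau,v\notin S$. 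These two conditions are interchanged termwise by the functoriality of the previous paragraph, so $\phi(P)$ is $(D,S)$-integral if and only if $P$ is $(\phi^*D,S)$-integral, which is the assertion.

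The only genuine obstacle is the bookkeeping with the embeddings. One must be certain that the finite set $S$ furnished by Theorem \ref{dist} is uniform over the entire Galois orbit of $\mathrm{Supp}(D)$, since it is this uniformity in the target point that lets a single $S$ handle all conjugates $Q_i^\sigma$ at once; this is exactly why the theorem is phrased with $S$ depending only on $\phi$. One must also verify the equivariances $\phi^*(Q^\sigma)=(\phi^*Q)^\sigma$ and $\phi(P)^\tau=\phi(P^\tau)$, both of which follow from $\phi$ being defined over $K$, so that $\mathrm{Gal}(\overline K/K)$ permutes the fibers of $\phi$ while preserving ramification indices. Everything else is the linearity of $D\mapsto\lambda_{D,v}$ and the nonnegativity of $\lambda_{Q,v}$ noted in Section \ref{chordal}.
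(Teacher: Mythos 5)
Your proposal is correct and follows essentially the same route as the paper's proof: apply the distribution relation termwise to get $\lambda_{\phi^*D,v}(P)=\lambda_{D,v}(\phi(P))=\sum n_i\lambda_{Q_i,v}(\phi(P))$, then use effectiveness of the divisor together with the nonnegativity of each $\lambda_{Q,v}$ to conclude that the sum vanishes if and only if every term does. The only difference is that you spell out the Galois-equivariance and the uniformity of $S$ over conjugates, bookkeeping that the paper's two-line proof leaves implicit.
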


\begin{proof}
 Extend $S$ so that the conclusion of Theorem \ref{dist} holds.  Then
for $D=\sum n_i Q_i$ with each $n_i > 0$ and $Q_i\in C_2(\overline K)$, we
have that $$\lambda_{\phi^*D,v}(P)=\lambda_{D,v}(\phi(P)) = \sum n_i
\lambda_{Q_i, v}(\phi (P)).$$ So $\lambda_{\phi^*D,v}(P)=0$ if and only if
$\lambda_{Q_i, v}(\phi (P))=0.$
\end{proof}

\section{Main Result}\label{main}
Let $E$ be an elliptic curve, $\psi: E\to E$ a morphism, and $\pi:E \to
\P^1$ be a finite covering.  A \emph{Latt\`es map} is a rational map $\phi:
\P^1 \to \P^1$ making the following diagram commute:
$$
\begin{CD}
E @>\psi>> E \\
@VV \pi V  @VV \pi V \\
\P^1 @>\phi>> \P^1
\end{CD}\medskip
$$
For instance, if $E$ is defined by the Weierstrass equation $y^2=x^3+ax^2+bx+c$,
$\psi=[2]$ is the multiplication-by-2 endomorphism on $E$, and $\pi(x,y)=x$,
then $$\phi(x)=\frac{x^4-2bx^2-8cx+b^2-4ac}{4x^3+4ax^2+4bx+4c}.$$

Fix an elliptic curve $E$ defined over a number field $K$, and for
$P\in\P^1(\overline K)$ define:
\begin{center}
\begin{align*}
[\varphi] &= \Biggl\{\phi:\P^1 \to \P^1   \;\bigg|\;
\pctext{2.5in}{there exist $K$-morphism $\psi:E\to E$ and finite covering
$\pi:E\to \P^1$ such that
$\pi\circ \psi = \phi \circ \pi$}\;\Biggr\} \\ \\
\Gamma_0 &= \bigcup_{\phi\in[\varphi]}\phi^+(P)\\ \\
\Gamma&= \left( \bigcup_{\phi\in[\varphi]} \phi^-(\Gamma_0) \right) \cup
\P^1(\overline K)_{[\varphi]-\mbox{preper}}
\end{align*}
\end{center}

A point $Q$ is $[\varphi]$-preperiodic if and only if $Q$ is $\phi$-preperiodic
for some $\phi\in[\varphi]$.  We write $\P^1(\overline
K)_{[\varphi]-\mbox{preper}}$ for the set of $[\varphi]$-preperiodic points in
$\P^1(\overline K)$.

\begin{thm}
If $Q\in \P^1(\overline K)$ is not $[\varphi]$-periodic, then $\Gamma$ contains
at most finitely many points in $\P^1(\overline K)$ which are $S$-integral
relative to $Q$.
\end{thm}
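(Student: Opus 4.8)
The plan is to transport the entire problem from $\P^1$ up to the elliptic curve $E$ through the covering $\pi$, where the family $[\varphi]$ becomes a collection of endomorphisms and torsion translations, so that $\pi^{-1}(\Gamma)$ is forced into a single subgroup of finite rank; the finiteness will then come from a Siegel/Vojta-type theorem for integral points inside a finite-rank subgroup. First I would fix the finitely many preimages $\tilde P_1,\dots,\tilde P_r\in\pi^{-1}(P)$ and let $M$ be the $\mathrm{End}(E)$-submodule of $E(\overline K)$ generated by $\tilde P_1,\dots,\tilde P_r$ together with $E_{\mathrm{tors}}$. For each $\phi\in[\varphi]$ the relation $\pi\circ\psi=\phi\circ\pi$ gives $\pi\circ\psi^k=\phi^k\circ\pi$, so $\psi^k(\pi^{-1}(P))\subseteq\pi^{-1}(\phi^k(P))$; since every admissible $\psi$ is a composition of an endomorphism $[\alpha]\in\mathrm{End}(E)$ with a translation by a torsion point, one gets $\psi^k(\tilde P_i)\in M$ and hence $\pi^{-1}(\Gamma_0)\subseteq M$. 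Taking backward orbits amounts to solving $\psi^n(\tilde R)\in M$, i.e. $[\alpha^n]\tilde R\in M$, so $\tilde R$ lies in the division hull $M_{\mathrm{div}}=\{R:[\beta]R\in M\text{ for some }0\ne\beta\in\mathrm{End}(E)\}$. As the $[\varphi]$-preperiodic points lift into $E_{\mathrm{tors}}\subseteq M_{\mathrm{div}}$, I would conclude $\pi^{-1}(\Gamma)\subseteq\Gamma_E:=M_{\mathrm{div}}$, a subgroup of finite rank since $\Gamma_E\otimes\Q=M\otimes\Q$ is finite dimensional.

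Next I would use the distribution relation to carry integrality across $\pi$. Applying Corollary \ref{dist2} to the finite morphism $\pi:E\to\P^1$ and the effective divisor $Q$, and enlarging $S$ by a finite set depending only on $\pi$, a point $R\in\P^1(\overline K)$ is $S$-integral relative to $Q$ if and only if every $\tilde R\in\pi^{-1}(R)$ is $S$-integral relative to the effective divisor $\pi^{\ast}Q$ on $E$. Because $Q$ is not $[\varphi]$-preperiodic, its preimages under $\pi$ are non-torsion, so $\pi^{\ast}Q$ is a nonzero effective divisor whose support consists entirely of non-torsion points of $E$.

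It then remains to show that $\Gamma_E$ contains only finitely many points that are $S$-integral relative to $\pi^{\ast}Q$. For this I would invoke the finiteness of $S$-integral points of a finite-rank subgroup of $E(\overline K)$ relative to a nonzero effective divisor supported on non-torsion points, which follows from the Subspace Theorem, or equivalently from Vojta's work on integral points in semiabelian varieties. Pulling the finitely many resulting points $\tilde R$ back down through $\pi$ yields finitely many $S$-integral points of $\Gamma$ relative to $Q$, as required.

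The hard part is this last finiteness input, and the non-torsion hypothesis is precisely what makes it valid: the statement genuinely fails when the divisor is supported at a torsion point. Indeed, torsion points of order prime to the residue characteristics outside $S$ inject into the reduction of $E$, so none of them reduce to the origin and they are therefore all $S$-integral relative to it, producing infinitely many integral points in $\Gamma_E$. Thus the role of the assumption that $Q$ is not $[\varphi]$-preperiodic is exactly to guarantee that $\pi^{\ast}Q$ avoids the torsion points. The other place demanding care is the finite-rank containment of the second step, particularly in the complex-multiplication case, where $\alpha$ ranges over an order in an imaginary quadratic field and one must check that the torsion translations appearing in each admissible $\psi$ keep the forward orbits, and hence their division hull, of finite rank.
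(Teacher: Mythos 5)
Your reduction to the elliptic curve coincides with the paper's: you generate the $\mathrm{End}(E)$-submodule from $\pi^{-1}(P)$, pass to its division hull, verify that $\pi^{-1}(\Gamma)$ lands inside it, and move the integrality condition across $\pi$ using Corollary \ref{dist2}. Those steps are fine. The gap is in the final finiteness input, which is the heart of the theorem. The set you must show finite consists of points of the division hull $M_{\mathrm{div}}$ that are $S$-integral relative to a divisor on $E$; these points have unbounded degree over $K$ (the hull contains all torsion points and all division points of $M$), whereas Siegel's theorem, the Subspace Theorem, and Vojta's theorems on integral points of semiabelian varieties concern rational points over a fixed number field, or finitely generated groups of bounded degree. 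None of them yields the statement you invoke. Finiteness of $S$-integral points of a division group relative to a divisor is precisely the content of Ih's conjecture, and the only available theorem here is the Grant--Ih result cited in the paper as \cite[Thm.~3.9(i)]{grant_ih} --- a genuinely deep input resting on equidistribution of small points, not on the Subspace Theorem.

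Moreover, that theorem does not apply to an arbitrary nonzero effective divisor supported on non-torsion points: it requires the support to contain two points whose \emph{difference} is non-torsion. This is exactly why the paper devotes its second half to the classification of $\pi$ into the four cases $\pi(x,y)=x,\,x^2,\,x^3,\,y$ corresponding to $\Lambda=\mu_2,\mu_4,\mu_6,\mu_3$: in the first three cases $\pi^{-1}(Q)$ contains a non-torsion $\xi$ together with $-\xi$, so $D=(\xi)+(-\xi)$ has $\xi-(-\xi)=2\xi$ non-torsion; in the case $\pi(x,y)=y$ the fibre is $\{\xi,\xi',\xi''\}$ with $\xi+\xi'+\xi''=0$, and one argues that $\xi-\xi'$ or $\xi-\xi''$ must be non-torsion, since otherwise $3\xi$ would be torsion. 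Your proof omits this construction entirely and substitutes a single non-torsion point in the support, a hypothesis under which the cited theorem is not stated and for which no proof is offered. Until you either supply such a divisor with a non-torsion difference of support points, or prove the stronger one-point finiteness statement you assert, the argument does not close.
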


\begin{proof}
Let $\Gamma_0'$ be the $\mbox{End}($E$)$-submodule of $E(\overline K)$ that is
finitely generated by the points in $\pi^{-1}(P)$, and let $$\Gamma'=\{\xi\in
E(\overline K) \mid \lambda(\xi) \in \Gamma_0' \mbox{ for some non-zero }
\lambda \in \mbox{End}(E) \}.$$  Then $\pi^{-1}(\Gamma)\subset \Gamma'$.
Indeed, if $\pi(\xi) \in \Gamma$ is not $[\varphi]$-preperiodic, then $\xi$ is
non torsion and $(\phi_1\circ \pi)(\xi)\in \Gamma_0$ for some Latt\`es map
$\phi_1$.  So $(\pi\circ \psi_1)(\xi) \in \Gamma_0$ for some morphism
$\psi_1:E\to E$, and this gives $(\pi\circ \psi_1)(\xi)= \phi_2(P)$ for some
Latt\`es map $\phi_2$. Therefore $\psi_1(\xi) \in (\pi^{-1}\circ \phi_2)(P) =
(\psi_2 \circ \pi^{-1})(P)$ for some morphism $\psi_2:E\to E$.  Since any
morphism $\psi:E\to E$ is of the form $\psi(X)=\alpha(X)+T$ where $\alpha\in
\mbox{End}(E)$ and $T\in E_{\mbox{tors}}$ (see \cite[6.19]{sil3}), we find
that there is a $\lambda \in \mbox{End}(E)$ such that $\lambda(\xi)$ is in
$\Gamma_0'$, the $\mbox{End}($E$)$-submodule generated by $\pi^{-1}(P)$.
Otherwise, if $\pi(\xi)\in \Gamma$ is $[\varphi]$-preperiodic, then
$\pi\left(E(\overline K)_{\mbox{tors}}\right) = \P^1(\overline
K)_{[\varphi]-\mbox{preper}}$ (\cite[Prop. 6.44]{sil3}) gives that $\xi$ may be
a torsion point; again $\xi \in\Gamma'$ since $E(\overline K)_{\mbox{tors}}
\subset \Gamma'$.  Hence $\pi^{-1}(\Gamma) \subset \Gamma'$.

Let $D$ be an effective divisor whose support lies entirely in $\pi^{-1}(Q)$,
let $\mathcal{R}_Q$ be the set of points in $\Gamma$ which are $S$-integral
relative to $Q$, and let $\mathcal{R}'_D$ be the set of points in $\Gamma'$
which are $S$-integral relative to $D$.  Extending $S$ so that Theorem
\ref{dist} holds for the map $\pi:E\to \P^1$, and since  $\mbox{Supp}(D)\subset
\mbox{Supp}(\pi^*Q)$, we have: if $\gamma \in \Gamma$ is $S$-integral relative
to $Q$, then $\pi^{-1}(\gamma)$ is $S$-integral relative to $D$. Therefore
$\pi^{-1}(\mathcal{R}_Q) \subset \mathcal{R}'_{D}$.  Now $\pi$ is a finite map
and $\pi(E(\overline K)) = \P^1(\overline K)$; so to complete the proof, it
suffices to show that $D$ can be chosen so that $\mathcal{R}'_{D}$ is finite.

From \cite[Prop. 6.37]{sil3}, we find that if $\Lambda$ is a nontrivial
subgroup of $\mbox{Aut}(E)$, then $E/{\Lambda} \cong \P^1$ and the map $\pi:E
\to \P^1$ can be determine explicitly.  The four possibilities for
$\pi$, which are $\pi(x,y) = x,\, x^2,\, x^3$, or $y$ correspond respectively
to the four possibilities for $\Lambda$, which are $\Lambda = \mu_2, \, \mu_4,
\, \mu_6$, or $\mu_3$, which in turn depends only on the $j$-invariant of $E$.
(Here, $\mu_N$ denotes the $N$th roots of unity in $\mathbb C$.)

First assume that $\pi(x,y)\not=y$.  Since $Q$ is not $[\varphi]$-preperiodic,
take $\xi \in \pi^{-1}(Q)$ to be non-torsion. Then $-\xi \in \pi^{-1}(Q)$ since
$\Lambda = \mu_2, \, \mu_4$, or $\mu_6$, and $\xi - (-\xi) = 2\xi$ is
non-torsion.  Taking $D=(\xi)+(-\xi)$, \cite[Thm. 3.9(i)]{grant_ih} gives that
$\mathcal{R}'_{D}$ is finite.

Suppose that $\pi(x,y)=y$.  Then $\pi^{-1}(Q) = \{\xi, \xi', \xi''\}$ where
$\xi+\xi'+\xi''=0$ and $\xi$ is non-torsion since $Q$ is not
$[\varphi]$-preperiodic.  Assuming that both $\xi-\xi'$ and $\xi-\xi''$ are
torsion give that $3\xi$ is torsion, and this contradicts the fact that $\xi$
is non-torsion.  Therefore, we may assume that $\xi-\xi'$ is non-torsion.
Now taking $D=(\xi)+(\xi')$, \cite[Thm. 3.9(i)]{grant_ih} again gives that
$\mathcal{R}'_{D}$ is finite. Hence $\mathcal{R}_Q$, the set of points in
$\Gamma$ which are $S$-integral relative to $Q$, is finite.
\end{proof}

\bibliographystyle{amsalpha}
\bibliography{back_orb}

\end{document}